\def\qed{\hfill {\hbox{${\vcenter{\vbox{               %HOLLOW SQUARE
   \hrule height 0.4pt\hbox{\vrule width 0.4pt height 6pt
   \kern5pt\vrule width 0.4pt}\hrule height 0.4pt}}}$}}}
\def\utr{\underline\triangleright}
\def\otr{\overline\triangleright}
\def\bar{\overline}
\newtheorem{theorem}{Theorem}
\newtheorem{lemma}[theorem]{Lemma}
\theoremstyle{definition}
\newtheorem{example}{Example}
\newtheorem{definition}{Definition}
\newtheorem{remark}{Remark}
\date{}
\title{\Large \textbf{Skew Brace Enhancements and Virtual Links}}
\author{
Melody Chang\footnote{Email: mchang2353@scrippscollege.edu.}
 \and
Sam Nelson\footnote{Email: Sam.Nelson@cmc.edu. Partially supported by
Simons Foundation Collaboration Grant 702597.}}
\begin{document}
\maketitle

\begin{abstract}
We use the structure of skew braces to enhance the biquandle counting invariant
for virtual knots and links for finite biquandles defined from 
skew braces. We introduce two new invariants: a single-variable polynomial
using skew brace ideals and a two-variable polynomial using the skew brace group
structures. We provide examples to show that the new invariants are not 
determined by the counting invariant and hence are proper enhancements.
\end{abstract}

\parbox{6in} {\textsc{Keywords:} Skew braces, biquandles, enhancements of 
counting invariants, virtual knots

\smallskip

\textsc{2020 MSC:} 57K12}

\section{\large\textbf{Introduction}}\label{I}

\textit{Skew braces} are a type of algebraic structure consisting of a set
with two group operations (analogous to a ring) which interact via a kind of 
modified distributive law. They have been studied for the last decade or so 
in papers such as \cite{B,GV,KSV,R}. Beyond their inherent algebraic interest,
skew braces are important in knot theory because they provide solutions to 
the set-theoretic Yang-Baxter equation, which in turn lead to invariants of 
knots.

Set-theoretic Yang-Baxter solutions are also provided by \textit{biquandles},
which have been studied for about the last two decades; see \cite{EN} and the
references therein for more. Biquandle-based knot invariants include many 
examples of \textit{enhancements}, invariants which specialize to the 
integer-valued biquandle counting invariant (i.e., the cardinality 
of the set of biquandle homomorphisms from the fundamental biquandle of the 
knot or link to a fixed finite biquandle) but which contain more information 
beyond merely the set's cardinality. 

A skew brace defines a biquandle, and thus the notion of biquandle coloring 
extends to a notion of skew brace coloring, allowing for skew brace counting
invariants and enhancements thereof. Like many biquandle-based invariants of 
classical knots and links, these skew brace invariants extend to the setting 
of virtual knot theory in a natural way by simply ignoring the virtual 
crossings, i.e., letting skew brace colors remain constant when passing 
through virtual crossings.

In this paper we define two infinite families of new polynomial
enhancements of the biquandle counting invariant for biquandles which come 
from skew braces. The paper is organized as follows. In Section \ref{QB} we 
collect a few definitions and observations about skew braces and their 
relationship with biquandles. We recall the biquandle counting invariant 
and define the skew brace counting invariant for finite biquandles and skew 
braces.
In Section \ref{GF} we introduce the new enhanced invariants: the two-variable
\textit{skew-brace enhanced polynomial} and the one-variable \textit{skew 
brace ideal polynomial}. In Section \ref{E} we provide examples to illustrate 
the computation of the invariant and to establish that the new invariants are 
proper enhancements, i.e. that they are not determined by the counting 
invariant. In Section \ref{Q} we end with some questions for future research.

\section{\large\textbf{Skew Braces and Skew Brace Colorings}}\label{QB}

We begin with a definition; see \cite{B,KSV,GV,R} for more.

\begin{definition}
A \textit{skew brace} is a set $X$ with two group operations which we
will denote by $(x,y)\mapsto x\circ y$ and $(x,y)\mapsto x*y$ with 
inverses denoted by $x^{\circ}$ and $x^*$, satisfying a modified
distributive law
\[x\circ (y*z)=(x\circ y)*x^**(x\circ z).\]
\end{definition}

To familiarize ourselves a little with the algebra of skew braces, let us
note the standard fact (see \cite{B} for example) that this modified 
distributive law has the useful consequence that the two group identities 
are equal.

\begin{lemma}
Let $X$ be a skew brace. Then the identity elements with respect to both
operations are the same.
\end{lemma}

\begin{proof}
Let us temporarily denote the $*$-identity element by $e_*$ and the 
$\circ$-identity element by $e_{\circ}$. Then we observe that for any $x,y\in X$ 
we have
\[x\circ y  =  x\circ (e_**y)  =  (x\circ e_*)*x^**(x\circ y)\]
and thus
\[(x\circ y)*(x\circ y)^* = (x\circ e_*)*x^**(x\circ y)*(x\circ y)^*\]
so we have 
\[e_* =(x\circ e_*)*x^*\]
which implies \[x=x\circ e_*.\] Then
\[x^{\circ}\circ x=x^{\circ}\circ x\circ e_*\]
which says $e_{\circ}=e_*$ as required.
\end{proof}

For simplicity, we will denote the common identity element of both operations
as $e$.

Let $X$ be a skew brace and let $K$ be an oriented classical or virtual
knot or link diagram.
A \textit{skew brace coloring} of $K$, also called an 
\textit{$X$-coloring of $K$}, is an assignment of elements in $X$ to each 
semiarc in $K$ such that at every classical crossing we have one of
the following pictures:
\[\includegraphics{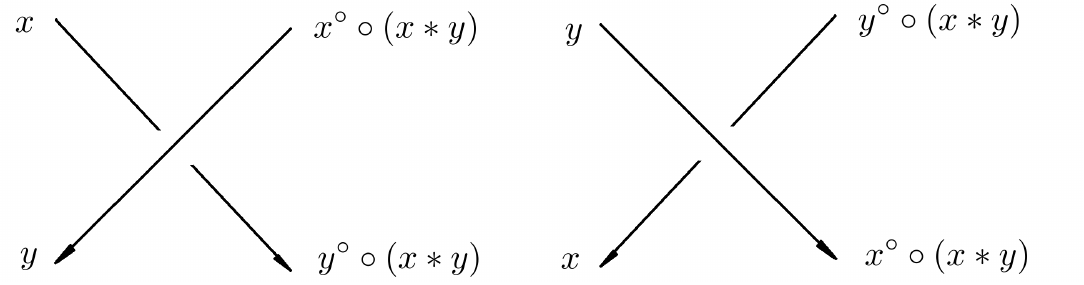}\]
At virtual crossings, the colors on the semiarcs are not changed.

Next we recall a definition found in \cite{EN}:
\begin{definition}
A \textit{biquandle} is a set $X$ with binary operations 
$\utr,\otr$ satisfying
\begin{itemize}
\item[(i)] For all $x\in X$,
\[x\utr x=x\otr x\]
\item[(ii)] For all $x,y\in X$, the operations $\utr$ and $\otr$ are 
right-invertible and the map of pairs $S(x,y)=(y\otr x,x\utr y)$ is invertible,
and
\item[(iii)] For all $x,y,z\in X$ the \textit{exchange laws}
\[\begin{array}{rcl}
(x\utr y)\utr(z\utr y) & = & (x\utr z)\utr(y\otr z) \\
(x\utr y)\otr(z\utr y) & = & (x\otr z)\utr(y\otr z) \\
(x\otr y)\otr(z\otr y) & = & (x\otr z)\otr(y\utr z) \\
\end{array}\]
\end{itemize}
are satisfied.
\end{definition}

As noted in \cite{GV}, skew brace colorings provide solutions to the 
set-theoretic Yang Baxter equation. In fact, we have

\begin{theorem}
A skew brace is a biquandle under the operations
\[x\utr y= y^{\circ}\circ(x*y)\quad \mathrm{and}\quad 
x\otr y= y^{\circ}\circ(y*x)\]
\end{theorem}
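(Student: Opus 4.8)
The plan is to verify the three biquandle axioms directly from the definitions of $\utr$ and $\otr$, systematically exploiting the skew brace axioms—especially the modified distributive law and the fact (from the preceding lemma) that the two identities coincide. First I would check axiom (i): computing $x\utr x = x^{\circ}\circ(x*x)$ and $x\otr x = x^{\circ}\circ(x*x)$, I observe these are literally identical since $x*y$ and $y*x$ agree when $y=x$, so (i) is immediate. Next, for axiom (ii) I would exhibit the right-inverses and the inverse of $S$ explicitly: the map $y\mapsto x\utr y$ should be inverted using the group structures (solving $z = y^{\circ}\circ(x*y)$ for $x$ in terms of $z,y$ gives $x = (y\circ z)^* * y = \ldots$), and I expect the inverse of $S(x,y)=(y\otr x, x\utr y)$ to come from the standard fact that skew braces yield \emph{nondegenerate} set-theoretic Yang–Baxter solutions, as noted via \cite{GV}.

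The heart of the proof, and the main obstacle, is axiom (iii): the three exchange laws. My plan is to reduce all three to the single skew brace distributive identity $x\circ(y*z)=(x\circ y)*x^**(x\circ z)$ together with associativity and inverse manipulations in each group. For a representative case, say the first exchange law $(x\utr y)\utr(z\utr y)=(x\utr z)\utr(y\otr z)$, I would substitute the definitions to rewrite both sides entirely in terms of $\circ$, $*$, and inverses, then use the distributive law to expand products of the form $a\circ(b*c)$ and push all $\circ$-operations through, aiming to normalize both sides to a common expression. The key algebraic lever will be that the modified distributive law lets one interchange the roles of the two operations in a controlled way; I anticipate needing the companion identity for $\circ$-inverses (the distributive behavior of $x^{\circ}$ over $*$) which follows formally from the stated law.

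Rather than grinding through all three laws independently, I would look for structural shortcuts. One efficient route is to recognize that $S(x,y)=(y\otr x, x\utr y)$ is precisely the Yang–Baxter solution associated to the skew brace in the literature, so that the braid relation $(S\times\mathrm{id})(\mathrm{id}\times S)(S\times\mathrm{id})=(\mathrm{id}\times S)(S\times\mathrm{id})(\mathrm{id}\times S)$ on $X^3$ is already known to hold; the three exchange laws are exactly the component equations of this braid relation together with the sideways invertibility, so establishing the correspondence with the known solution would let me invoke \cite{GV} directly and dispatch (iii) in one stroke. If the paper prefers a self-contained argument, I would instead verify the first exchange law by the direct distributive-law computation sketched above and remark that the remaining two follow by the symmetric computations (swapping the roles of $\utr$ and $\otr$, which corresponds to replacing $x*y$ by $y*x$ throughout). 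The expected difficulty is purely bookkeeping: keeping track of which inverse ($\circ$ versus $*$) appears where, since the two group structures do not commute and the distributive law mixes them.
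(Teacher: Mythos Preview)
Your proposal is correct and mirrors the paper's approach almost exactly: axiom (i) is immediate, axiom (ii) is handled by exhibiting explicit right-inverses and an explicit $S^{-1}$, and axiom (iii) is established by a direct distributive-law expansion of the first exchange law (both sides reduce to $((z*y)^{\circ}\circ x)*(z*y)^{\circ *}$), with the remaining two declared similar. One small slip to fix when you write it out: solving $z = y^{\circ}\circ(x*y)$ for $x$ gives $x=(y\circ z)*y^{*}$, not $(y\circ z)^{*}*y$; the paper's explicit right-inverses are $x\,\utr^{-1}\,y=(y\circ x)*y^{*}$ and $x\,\otr^{-1}\,y=y^{*}*(y\circ x)$.
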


The result has been previously established in Corollary 3.3 of \cite{SV}
using different notation; let us verify using our notation.

\begin{proof}
We must verify that the biquandle axioms are satisfied. Checking, we have
\[x\utr x=x^{\circ}\circ(x*x)=x\otr x\] 
as required, and axiom (i) is satisfied.

For axiom (ii), we note that  the operations
\[x\utr^{-1} y= (y\circ x)*y^*\quad \mathrm{and}\quad 
x\otr^{-1} y= y^**(y\circ x)\]
are right-inverses for $\utr$ and $\otr$:
\begin{eqnarray*}
(x\utr^{-1} y)\utr y & = & y^{\circ}\circ((y\circ x)*y^**y) = x,\\
(x\utr y)\utr^{-1}y & = &(y\circ y^{\circ}\circ(x*y))*y^*=x,\\
(x\otr^{-1} y)\otr y & = & y^{\circ}\circ(y*(y^**(y\circ x))) =x\quad \mathrm{and} \\
(x\otr y)\otr^{-1} y & = & y^**(y\circ (y^{\circ}\circ(y*x))) =x
\end{eqnarray*}
and that the map $S^{-1}:X\times X\to X\times X$ given by
\[S^{-1}(x,y)=(((x\circ y^{\circ})*x^*)^{\circ},((x\circ y^{\circ})*x^*)^{\circ}\circ x\circ y^{\circ})\]
is the inverse of the map 
$S(x,y)=(x^{\circ}\circ(x*y),y^{\circ}\circ(x*y))=(y\otr x,x\utr y)$.

Now, let us consider the exchange laws. We compute
\begin{eqnarray*}
(x\utr y)\utr(z\utr y) 
& = & [y^{\circ}\circ(x*y)]\utr [y^{\circ}\circ (z*y)] \\
& = & [y^{\circ}\circ (z*y)]^{\circ}\circ[(y^{\circ}\circ(x*y))*(y^{\circ}\circ (z*y))] \\
& = & [y^{\circ}\circ (z*y)]^{\circ}\circ[y^{\circ}\circ(x*y)]*[y^{\circ}\circ (z*y)]^{\circ *}*[(y^{\circ}\circ (z*y))^{\circ}\circ(y^{\circ}\circ (z*y))] \\
& = & [y^{\circ}\circ (z*y)]^{\circ}\circ[y^{\circ}\circ(x*y)]*[y^{\circ}\circ (z*y)]^{\circ *} \\
& = & [y^{\circ}\circ (z*y)]^{\circ}\circ[y^{\circ}\circ(x*y)]*[(z*y)^{\circ}\circ y]^* \\
& = & [(z*y)^{\circ}\circ y\circ y^{\circ}\circ(x*y)]*[(z*y)^{\circ}\circ y]^* \\
& = & [(z*y)^{\circ}\circ(x*y)]*[(z*y)^{\circ}\circ y]^* \\
& = & ((z*y)^{\circ}\circ x)*(z*y)^{\circ *}*[(z*y)^{\circ}\circ y]*[(z*y)^{\circ}\circ y]^* \\
& = & ((z*y)^{\circ}\circ x)*(z*y)^{\circ *} 
\end{eqnarray*}
and
\begin{eqnarray*}
(x\utr z)\utr(y\otr z)
& = & [z^{\circ} \circ(x*z)]\utr[z^{\circ}\circ(z*y)] \\
& = & [z^{\circ}\circ(z*y)]^{\circ}\circ[(z^{\circ} \circ(x*z))*(z^{\circ}\circ(z*y))] \\
& = & [(z^{\circ}\circ(z*y))^{\circ}\circ((z^{\circ} \circ(x*z))]*[z^{\circ}\circ(z*y)]^{\circ *}*[(z^{\circ}\circ(z*y))^\circ \circ(z^{\circ}\circ(z*y))] \\
& = & [(z^{\circ}\circ(z*y))^{\circ}\circ((z^{\circ} \circ(x*z))]*[z^{\circ}\circ(z*y)]^{\circ *} \\
& = & [(z*y)^{\circ}\circ z\circ z^{\circ} \circ(x*z)]*[z^{\circ}\circ(z*y)]^{\circ *} \\
& = & [(z*y)^{\circ}\circ(x*z)]*[(z*y)^{\circ}\circ z]^* \\
& = & ((z*y)^{\circ}\circ x)*(z*y)^{\circ *}*[(z*y)^{\circ}\circ z)]*[(z*y)^{\circ}\circ z]^* \\
& = & ((z*y)^{\circ}\circ x)*(z*y)^{\circ *} 
\end{eqnarray*}
and the first exchange law is satisfied. The other two are similar and left 
to the reader.
\end{proof}

A skew brace coloring is a biquandle coloring by the associated
biquandle of the skew brace.
Hence, it follows that the number of skew brace colorings of an oriented 
classical or virtual knot or link 
diagram $K$ by a finite skew brace $X$ is a knot invariant, which we will 
denote by $\Phi_X^{\mathbb{Z}}(K)=|\mathcal{C}(K,X)|$ where $\mathcal{C}(K,X)$ 
denotes the set of $X$-colorings of $K$.

The set-theoretic Yang-Baxter solutions defined by skew braces with commutative
$*$ operation are \textit{involutive}, meaning that the vertical map of pairs
\[r(x,y)=(x^**(x\circ y),\ (x^**(x*y))^{\circ}\circ x\circ y)\]
satisfies $r^2=\mathrm{Id}$. More precisely, we note that the first component
of $r^2(x,y)$ is
\begin{eqnarray*}
(x^**(x\circ y))^**[(x^**(x\circ y)) \circ(x^**(x*y))^{\circ}\circ x\circ y)] 
& = & (x^**(x\circ y))^**[x\circ y] \\
& = & (x\circ y)^**(x^*)^**(x\circ y) \\
& = & (x\circ y)^**x*(x\circ y) 
\end{eqnarray*}
which equals $x$ if $*$ is commutative, and
the second component is
\begin{eqnarray*}
[(x\circ y)^**x*(x\circ y)]^{\circ}\circ(x^**(x\circ y))\circ(x^**(x\circ y))^{\circ}\circ x\circ y 
& = & [(x\circ y)^**x*(x\circ y)]^{\circ} \circ x\circ y 
\end{eqnarray*}
which again reduces to $y$ in the case that $*$ is commutative.

\begin{remark}
Various notational conventions for the skew brace operations are used in the 
literature, including commonly writing $+$ for $*$ and $-x$ for $x^*$ even 
when $*$ is noncommutative. To avoid certain errors, e.g. drawing the 
conclusion that all skew braces are involutive, we will prefer to use the 
$*$ notation.
\end{remark}

The fact that all groups of cardinality less than six are abelian 
implies that the coloring invariant and its enhancements for skew braces with
fewer than six elements cannot 
distinguish between knots or links related by the \textit{2-move}:
\[\includegraphics{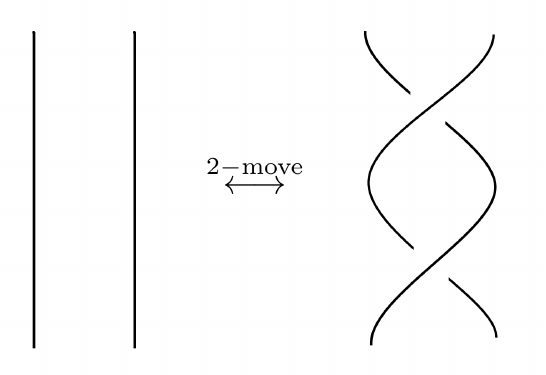}.\]
Since the 2-move can be combined with a Reidemeister II move to yield a crossing
change, 
\[\includegraphics{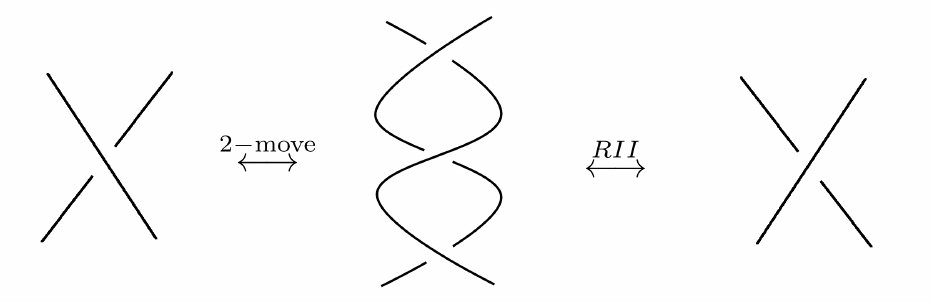}\]
it follows that knots and links in various categories (classical, virtual, 
flat virtual, etc.) which are related by crossing change cannot be 
distinguished by involutory skew brace invariants. Indeed, involutory
skew brace counting invariants and their enhancements are trivial for classical
knots and links. However, virtual knots are links fall into several distinct
classes under the virtual Reidemeister moves together with crossing changes;
as we will show, involutory skew brace invariants and their enhancements can 
be effective at distinguishing these classes of virtual knots and links. 
Moreover, involutory skew brace invariants are well-defined for flat virtual 
knots.

Finite skew brace structures on a set $X$ can be specified in various ways --
algebraic formulas for the two group operations, using pairs of tuples as 
described in \cite{KSV}, etc. For our purposes it will be most useful
to specify a skew brace on $X=\{1,2,\dots, n\}$ with a pair of operation
tables for the two group operations, which we will call \textit{structure 
tables}.

\begin{example}
The structure tables
\[
\begin{array}{r|rrrr}
\circ & 1 & 2 & 3 & 3 \\ \hline
1 & 1 & 2 & 3 & 4 \\
2 & 2 & 1 & 4 & 3 \\
3 & 3 & 4 & 1 & 2 \\
4 & 4 & 3 & 2 & 1
\end{array}\quad
\begin{array}{r|rrrr}
* & 1 & 2 & 3 & 3 \\ \hline
1 & 1 & 2 & 3 & 4 \\
2 & 2 & 3 & 4 & 1 \\
3 & 3 & 4 & 1 & 2 \\
4 & 4 & 1 & 2 & 3
\end{array}
\]
define a skew brace with $\circ$-group isomorphic to the Klein 4-group 
$\mathbb{Z}_2\oplus\mathbb{Z}_2$ and $*$-group isomorphic to $\mathbb{Z}_4$.
\end{example}

If $\ast$ is nonabelian, then the skew brace's resulting set-theoretic 
Yang-Baxter solution $r$ may not be involutive; these are the examples which
can give us interesting invariants of classical knots and links.
\begin{example}\label{ex:nab}
The skew brace structure on the set $X=\{1,2,3,4,5,6\}$ defined by the structre
tables
\[
\begin{array}{r|rrrrrr}
\circ & 1 & 2 & 3 & 4 & 5 & 6 \\ \hline
1 & 1 & 2 & 3 & 4 & 5 & 6 \\
2 & 2 & 3 & 1 & 5 & 6 & 4 \\
3 & 3 & 1 & 2 & 6 & 4 & 5 \\
4 & 4 & 5 & 6 & 3 & 1 & 2 \\
5 & 5 & 6 & 4 & 1 & 2 & 3 \\
6 & 6 & 4 & 5 & 2 & 3 & 1
\end{array}
\quad 
\begin{array}{r|rrrrrr}
* & 1 & 2 & 3 & 4 & 5 & 6 \\ \hline
1 & 1 & 2 & 3 & 4 & 5 & 6 \\
2 & 2 & 3 & 1 & 5 & 6 & 4 \\
3 & 3 & 1 & 2 & 6 & 4 & 5 \\
4 & 4 & 6 & 5 & 1 & 3 & 2 \\
5 & 5 & 4 & 6 & 2 & 1 & 3 \\
6 & 6 & 5 & 4 & 3 & 2 & 1
\end{array}
\]
defines a non-involutive set-theoretic Yang-Baxter solution with for example
$r(4,3)=(2,5)\ne(4,3)$.
\end{example}

\section{\large\textbf{Skew Brace Enhancements}}\label{GF}

In this section we introduce new invariants of oriented  virtual knots and 
links which enhance the skew brace counting invariant.
More precisely, these are polynomial invariants which specialize to the
counting invariant when the variables are set equal to 1. 

Recall that for any biquandle coloring $f\in\mathcal{C}(X,K)$, the 
\textit{image} of $f$, denoted $\mathrm{Im}(f)$, is the biquandle closure of 
the set of biquandle elements used in $f$, i.e, the set of elements of $X$ 
obtainable from biquandle elements used in $f$ using the biquandle operations 
$\utr$ and $\otr$ (as well as their right inverses, though for finite 
biquandles the right inverse operations can be expressed in terms of the 
standard operations). Let us denote the group closures of $S\subset X$ under 
the group operations $*$ and $\circ$ by $\bar{S^*}$ and $\bar{S^{\circ}}$ 
respectively.

\begin{definition}
Let $(X,*,\circ)$ be a skew brace. For any oriented classical or virtual
knot or link $K$, we define the \textit{skew brace enhanced
polynomial} of $K$ to be
\[\Phi_{X}^{SB}(K)=\sum_{f\in \mathcal{C}(K,X)} u^{|\bar{Im(f)^{\circ}}|}v^{|\bar{Im(f)^{*}}|}.
\]
\end{definition}

\begin{example}
Let us illustrate the computation of the invariant for the \textit{virtual 
Hopf link} on the left and compare it with the unlink of two components on 
the right:
\[\includegraphics{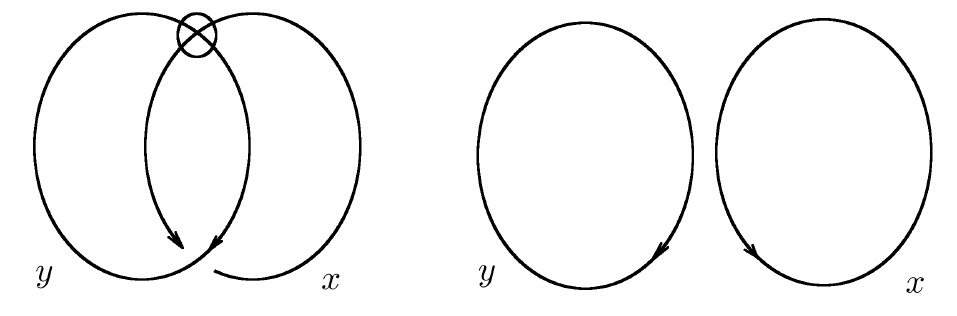}\]
Colorings of the virtual Hopf link are pairs $x,y\in X$ satisfying 
$x\utr y=x$ and $y\otr x=y$ while colorings of the unlink are just pairs
$x,y\in X$ without restriction. Then for example let $X$ be the skew brace
specified by the structure tables
\[
\begin{array}{r|rrrr}
\circ & 1 & 2 & 3 & 4 \\\hline
1 & 1 & 2 & 3 & 4 \\
2 & 2 & 3 & 4 & 1 \\
3 & 3 & 4 & 1 & 2 \\
4 & 4 & 1 & 2 & 3
\end{array}
\quad
\begin{array}{r|rrrr}
* & 1 & 2 & 3 & 4 \\\hline
1 & 1 & 2 & 3 & 4 \\
2 & 2 & 1 & 4 & 3 \\
3 & 3 & 4 & 1 & 2 \\
4 & 4 & 3 & 2 & 1.
\end{array}
\]
The coloring equations for the virtual Hopf link in terms of the skew brace
\[
\begin{array}{rcccl}
x\utr y & = & y^{\circ}\circ(x*y) & = & x \\
y\otr x & = & x^{\circ}\circ(x*y) & = & y. 
\end{array}
\]
Of the sixteen potential colorings, as the reader can verify, there are four 
which do not satisfy the conditions: $(x,y)\in\{(2,2),(2,4),(4,2),(4,4)\}$. 
For the twelve valid colorings, there are eight which have closures of the 
entire set under both group operations, three which have closures of 
cardinality two under both group operations, and one with closures of 
cardinality 1 under both group operations. Hence, we have
\[\Phi_{X}^{SB}(\mathrm{vHopf})=8u^4v^4+3u^2v^2+uv.\]
Repeating for the other colorings of the unlink, we have
\[\Phi_{X}^{SB}(\mathrm{U_2})=12u^4v^4+3u^2v^2+uv.\]
Hence the invariant detects the non-triviality of the virtual Hopf link.
\end{example}

Next, we have a definition from \cite{KSV}.

\begin{definition}
Let $X$ be a skew brace. A subset $I\subset X$ is an \textit{ideal} 
if for all $x,y\in I$ and $z\in X$ the elements
\[y^{\circ}\circ x, \quad z^**x*z,\quad z^{\circ}\circ x\circ z \quad 
\mathrm{and}\quad z^**(z\circ x) \]
are also in $I$.
\end{definition}

\begin{example}
The skew brace with structure tables
\[
\begin{array}{r|rrrrrr}
\circ & 1 & 2 & 3 & 4 & 5 & 6 \\ \hline
1 & 1 & 2 & 3 & 4 & 5 & 6 \\
2 & 2 & 3 & 4 & 5 & 6 & 1 \\
3 & 3 & 4 & 5 & 6 & 1 & 2 \\
4 & 4 & 5 & 6 & 1 & 2 & 3 \\
5 & 5 & 6 & 1 & 2 & 3 & 4 \\
6 & 6 & 1 & 2 & 3 & 4 & 5
\end{array} \quad
\begin{array}{r|rrrrrr}
* & 1 & 2 & 3 & 4 & 5 & 6 \\ \hline
1 & 1 & 2 & 3 & 4 & 5 & 6 \\
2 & 2 & 1 & 6 & 5 & 4 & 3 \\
3 & 3 & 4 & 5 & 6 & 1 & 2 \\
4 & 4 & 3 & 2 & 1 & 6 & 5 \\
5 & 5 & 6 & 1 & 2 & 3 & 4 \\
6 & 6 & 5 & 4 & 3 & 2 & 1
\end{array}
\]
has ideals including $\{1\}$, $\{1,3,5\}$ and $\{1,2,3,4,5,6\}$.
\end{example}

\begin{definition}
Let $X$ be a skew brace and $K$ an oriented knot or link represented by
a diagram $D$. Let $I(\mathrm{Im}(f))$ be the skew brace ideal generated by
the image of $f$. We define the \textit{skew brace ideal polynomial} of $K$
with respect to $X$ to be
\[\Phi_X^{I}(K)=\sum_{f\in \mathcal{C}(K,X)} u^{|I(\mathrm{Im}(f))|}.\]
\end{definition}

We can now state our main theorem.

\begin{theorem}
For any skew brace $(X,*,\circ)$, the skew brace enhanced polynomials and
skew brace ideal polynomials are invariants of oriented classical and virtual 
knots and knots and links.
\end{theorem}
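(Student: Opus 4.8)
The plan is to bootstrap off the invariance of the counting invariant $\Phi_X^{\mathbb{Z}}$, which is already available to us. Since a skew brace is a biquandle under $\utr,\otr$ (by the theorem above) and skew brace colorings are exactly biquandle colorings by the associated biquandle, every classical or virtual Reidemeister move induces a bijection $\phi_R:\mathcal{C}(K,X)\to\mathcal{C}(K',X)$ between the coloring sets of diagrams $K$ and $K'$ related by that move. Both $\Phi_X^{SB}$ and $\Phi_X^{I}$ are sums over $\mathcal{C}(K,X)$ in which each coloring $f$ contributes a monomial weight, so to prove the theorem it suffices to show that these bijections $\phi_R$ are weight-preserving, i.e.\ that the monomial attached to $f$ equals the monomial attached to $\phi_R(f)$ for every coloring $f$ and every move $R$.

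First I would reduce both weights to a single invariant of the coloring, namely its biquandle image $\mathrm{Im}(f)$. For the enhanced polynomial the weight $u^{|\bar{\mathrm{Im}(f)^{\circ}}|}v^{|\bar{\mathrm{Im}(f)^{*}}|}$ is a function of $\mathrm{Im}(f)$ alone, since the $\circ$- and $*$-group closures of a subset of $X$ are determined by that subset. For the ideal polynomial the weight $u^{|I(\mathrm{Im}(f))|}$ is likewise a function of $\mathrm{Im}(f)$ alone, since the ideal $I(\mathrm{Im}(f))$ generated by a subset is determined by that subset. Hence both families of polynomials will be handled at once, and the theorem follows from the single claim that the move bijections preserve the biquandle image: $\mathrm{Im}(\phi_R(f))=\mathrm{Im}(f)$ for every $f$ and every $R$.

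The heart of the argument, and the step I expect to require the most care, is verifying this image-preservation claim move-by-move. The essential point is that $\mathrm{Im}(f)$ is defined as the biquandle closure of the colors used by $f$, not merely the raw set of colors. Under a classical Reidemeister move, the semiarcs lying outside the local disk of the move retain their colors, while each new semiarc created inside the disk receives a color obtained from the outer colors by applying $\utr,\otr$ (or their right inverses) as dictated by the coloring rule at the crossings involved; such colors already lie in the biquandle closure of the outer colors. Consequently the colors used by $f$ and by $\phi_R(f)$ generate the same biquandle closure, so $\mathrm{Im}(f)=\mathrm{Im}(\phi_R(f))$. I would check this explicitly for the Reidemeister I, II, and III moves, invoking axiom (i) for RI, right-invertibility of the returning strands for RII, and the exchange laws for RIII, and I would note that for the virtual Reidemeister moves the colors pass through unchanged, so the claim is immediate.

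With the image-preservation claim in hand, the weight-preservation of each $\phi_R$ is automatic from the reduction in the second paragraph, and therefore both $\Phi_X^{SB}$ and $\Phi_X^{I}$ take equal values on any two diagrams related by a finite sequence of classical and virtual Reidemeister moves. Since such sequences generate the equivalence relation on diagrams of oriented classical and virtual knots and links, both polynomials are invariants, completing the proof.
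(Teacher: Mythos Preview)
Your proposal is correct and follows essentially the same approach as the paper: both arguments observe that the monomial weights are functions of the image sub-biquandle $\mathrm{Im}(f)$ alone, and that $\mathrm{Im}(f)$ is preserved under the Reidemeister-move bijections on colorings. The paper simply asserts the invariance of $\mathrm{Im}(f)$ as a known fact in one line, whereas you spell out the move-by-move verification more explicitly.
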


\begin{proof}
The image sub-biquandle $\mathrm{Im}(f)$ of a biquandle coloring is already 
an invariant for each coloring; it follows that the contributions to the 
polynomials from each coloring are not changed by Reidemeister moves.
\end{proof}

\section{\large\textbf{Examples}}\label{E}

In this section we collect some computations and examples of the new 
invariants.

\begin{example}
Let $L$ and $L'$ be the virtual links
\[\includegraphics{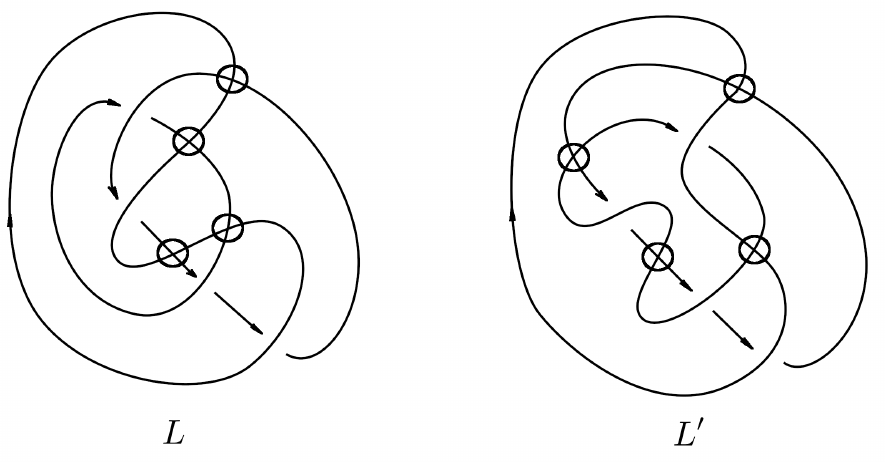}\]
and let $X$ be the skew brace with structure tables
\[
\begin{array}{r|rrrrrrrr}
\circ & 1 & 2 & 3 & 4 & 5 & 6 & 7 & 8 \\ \hline
1 & 1 & 2 & 3 & 4 & 5 & 6 & 7 & 8 \\
2 & 2 & 1 & 4 & 3 & 8 & 7 & 6 & 5 \\
3 & 3 & 4 & 1 & 2 & 6 & 5 & 8 & 7 \\
4 & 4 & 3 & 2 & 1 & 7 & 8 & 5 & 6 \\
5 & 5 & 8 & 6 & 7 & 3 & 1 & 2 & 4 \\
6 & 6 & 7 & 5 & 8 & 1 & 3 & 4 & 2 \\
7 & 7 & 6 & 8 & 5 & 2 & 4 & 3 & 1 \\
8 & 8 & 5 & 7 & 6 & 4 & 2 & 1 & 3 
\end{array}
\quad
\begin{array}{r|rrrrrrrr}
* & 1 & 2 & 3 & 4 & 5 & 6 & 7 & 8 \\ \hline
1 & 1 & 2 & 3 & 4 & 5 & 6 & 7 & 8 \\
2 & 2 & 1 & 4 & 3 & 8 & 7 & 6 & 5 \\
3 & 3 & 4 & 1 & 2 & 6 & 5 & 8 & 7 \\
4 & 4 & 3 & 2 & 1 & 7 & 8 & 5 & 6 \\
5 & 5 & 7 & 6 & 8 & 3 & 1 & 4 & 2 \\
6 & 6 & 8 & 5 & 7 & 1 & 3 & 2 & 4 \\
7 & 7 & 5 & 8 & 6 & 2 & 4 & 1 & 3 \\
8 & 8 & 6 & 7 & 5 & 4 & 2 & 3 & 1 \\
\end{array}
\]
Then our \texttt{python} computations give skew brace enhanced polynomial 
values
\[\Phi_X^{SB}(L)  =  %[[-1,3],[2,4],[1,-2,-3,-4]],J[1727]
144u^8v^8 + 154u^4v^4 + 21u^2v^2 + uv \ne
168u^8v^8 + 130u^4v^4 + 21u^2v^2 + uv=
 \Phi_X^{SB}(L')%[[-1,3,2],[4,1],[-2,-3,-4]],J[1727])
\] and
\[
%>>> sbinv([[-1,3],[2,4],[1,-2,-3,-4]],J[1727])
\Phi_X^I(L)=
144u^8 + 168u^4 + 7u^2 + u\ne
%>>> sbinv([[-1,3,2],[4,1],[-2,-3,-4]],J[1727])
168u^8 + 144u^4 + 7u^2 + u=
\Phi_X^I(L').
\]
Since both links have 320 $X$-colorings, this example shows that both
enhancements are proper and not determined by the counting invariant.
\end{example}

\begin{example}
We computed the two-variable invariant for the sets of prime 
virtual knots with up to four classical crossings 
as found at the knot atlas \cite{KA}
with respect to the skew brace $X$ with structure tables
\[ 
\begin{array}{r|rrrrrrrr}
\circ & 1 & 2 & 3 & 4 & 5 & 6 & 7 & 8 \\ \hline
1 & 1 & 2 & 3 & 4 & 5 & 6 & 7 & 8 \\
2 & 2 & 1 & 6 & 5 & 4 & 3 & 8 & 7 \\
3 & 3 & 6 & 1 & 8 & 7 & 2 & 5 & 4 \\
4 & 4 & 7 & 8 & 3 & 2 & 5 & 6 & 1 \\
5 & 5 & 8 & 7 & 6 & 1 & 4 & 3 & 2 \\
6 & 6 & 3 & 2 & 7 & 8 & 1 & 4 & 5 \\
7 & 7 & 4 & 5 & 2 & 3 & 8 & 1 & 6 \\
8 & 8 & 5 & 4 & 1 & 6 & 7 & 2 & 3
\end{array}
\begin{array}{r|rrrrrrrr}
* & 1 & 2 & 3 & 4 & 5 & 6 & 7 & 8 \\ \hline
1 & 1 & 2 & 3 & 4 & 5 & 6 & 7 & 8 \\
2 & 2 & 1 & 6 & 8 & 7 & 3 & 5 & 4 \\
3 & 3 & 6 & 1 & 7 & 8 & 2 & 4 & 5 \\
4 & 4 & 8 & 7 & 1 & 6 & 5 & 3 & 2 \\
5 & 5 & 7 & 8 & 6 & 1 & 4 & 2 & 3 \\
6 & 6 & 3 & 2 & 5 & 4 & 1 & 8 & 7 \\
7 & 7 & 5 & 4 & 3 & 2 & 8 & 1 & 6 \\
8 & 8 & 4 & 5 & 2 & 3 & 7 & 6 & 1 
\end{array}
\]
The results are collected in the table.
\[
\begin{array}{r|l}
\Phi_X^{SB}(L) & L\\ \hline
5u^2v^2+uv & 3.1, 3.2, 3.4, 4.10, 4.11, 4.15, 4.17, 4.19, 4.20, 4.22, 4.23, 4.24, 4.29, 4.32, 4.34, 4.35, \\ & 4.38, 4.39, 4.42, 4.49, 4.50, 4.57, 4.62, 4.63, 4.66, 4.67, 4.70, 4.78, 4.79 \\
2u^8v^8+5u^2v^2+uv & 2.1, 3.2, 3.5, 3.6, 3.7, 4.3, 4.6, 4.12, 4.13, 4.14, 4.18, 4.21, 4.25, 4.26, 4.27, 4.28, 4.30, \\ & 4.31, 4.36, 4.37, 4.40, 4.41, 4.43, 4.44, 4.45, 4.46, 4.47, 4.48, 4.51, 4.53, 4.54, 4.59, \\ & 4.60, 4.61, 4.64, 4.65, 4.68, 4.69, 4.71, 4.73, 4.74, 4.75,  4.80, 4.81, 4.82, 4.83, 4.84, \\ & 4.86, 4.87, 4.88, 4.91, 4.92, 4.93, 4.94, 4.95, 4.96, 4.97, 4.99, 4.100, 4.101, 4.102,\\ &  4.103,  4.104, 4.105, 4.106, 4.108 \\
6u^8v^8+5u^2v^2+uv & 4.9, 4.16, 4.33, 4.52, 4.58, 4.72 \\
14u^8v^8+5u^2v^2+uv & 4.1, 4.2, 4.4, 4.5, 4.7, 4.8, 4.55, 4.56, 4.76, 4.77, 4.85, 4.89, 4.90, 4.98, 4.107 \\
\end{array}
\]
We note that the enhancement information provides additional information beyond
the counting invariant in that it filters the colorings into classes. Each 
virtual knot in this example has a $uv$ term with coefficient 1 coming from the
monochromatic coloring by the identity element and a $u^2v^2$ term with 
coefficient 5; the differences are in the coefficients of the surjective 
$u^8v^8$ colorings, which
range from zero to 14. In particular, the virtual knots in the table with
invariant values other than $2u^8v^8+5u^2v^2+uv$ cannot be unknotted using 
crossing changes together with virtual Reidemeister moves.
\end{example}

\begin{example}
Provided $*$ is noncommutative, skew brace invariants can be effective at 
distinguishing classical knots and links as well as virtual and flat knots and
links. For example, the skew brace $X$ with noncommutative $*$ operation in 
Example \ref{ex:nab} distingishes the trefoil knot $3_1$ from the figure eight
knot $4_1$ via the counting invariant, with
\[\Phi_X^{\mathbb{Z}}(3_1)=12\ne 6=\Phi_X^{\mathbb{Z}}(4_1).\]
Our enhancements further refine this information into
\[\Phi_X^{I}(3_1)=9u^6+2u^3+u\ne 3u^6+2u^3+u =\Phi_X^{I}(4_1)\]
and\[\Phi_X^{SB}(3_1)=8u^6v^6+2u^3v^3+u^2v^2+uv\ne 2u^6v^6+2u^3v^3+u^2v^2+uv
=\Phi_X^{SB}(4_1).\]
\end{example}

\section{\large\textbf{Questions}}\label{Q}

We conclude this short paper with a list of questions for future study.

\begin{itemize}
\item What additional enhancements of the skew brace counting invariant using
the skew brace structure are possible?
\item In our examples, the powers on $u$ and $v$ are always the same; is this
true in general, or a consequence of the small cardinality of our example 
skew braces?
\end{itemize}

\bibliography{mc-sn}{}
\bibliographystyle{abbrv}

\bigskip

\noindent
\textsc{Department of Mathematical Sciences \\
Claremont McKenna College \\
850 Columbia Ave. \\
Claremont, CA 91711}

\end{document}